\newcommand{\veps}{\varepsilon}
\newcommand{\vphi}{\varphi}
\newcommand{\al}{\alpha}
\newcommand{\la}{\lambda}
\newcommand{\om}{\omega}
\newcommand{\cP}{\mathcal{P}}
\newcommand{\cF}{\mathcal{F}}
\newtheorem{thm}{Theorem}
\newtheorem{prop}[thm]{Proposition}
\newtheorem{lem}[thm]{Lemma}
\newtheorem{cor}[thm]{Corollary}
\theoremstyle{definition}
\newtheorem{remark}[thm]{Remark}
\numberwithin{thm}{section}
\numberwithin{equation}{section}
\renewcommand{\[}{\begin{equation}}
\renewcommand{\]}{\end{equation}}
\newcommand{\wed}{\wedge}
\title[Weak convergence of Monge-Amp\`ere measures]{Weak convergence of Monge-Amp\`ere measures on compact Hermitian manifolds}
\author{S\l awomir Ko\l odziej and Ngoc Cuong Nguyen} 
\address{Faculty of Mathematics and Computer Science, Jagiellonian University, \L ojasiewicza 6, 30-348 Krak\'ow, Poland}
\email{slawomir.kolodziej@im.uj.edu.pl}
\address{Department of Mathematical Sciences, KAIST, 291 Daehak-ro, Yuseong-gu, Daejeon 34141, South Korea}
\email{cuongnn@kaist.ac.kr}
\begin{document}

\maketitle

\begin{abstract}
We give a sufficient condition on a sequence of uniformly bounded $\om$-plurisubharmonic functions, $\om$ being a Hermitian metric, for which the sequence of associated Monge-Amp\`ere measures converges weakly. This criterion can be used to obtained a bounded $\om$-plurisubharmonic solution to the Monge-Amp\`ere equation.
\end{abstract}

\bigskip
\bigskip

\section{Introduction}

Let $(X, \om)$ be a compact Hermitian manifold of dimension $n$. The complex Monge-Amp\`ere equation  on those manifolds has been studied extensively in recent years. The classical solution in the smooth case was provided by Tosatti-Weinkowe \cite{TW10b} (for $n=2$ the equation was solved earlier by Cherrier \cite{Ch87}). The weak solutions to the equation were studied in \cite{DK12}, \cite{KN1, KN4, KN2, KN7}, \cite{LPT20} and recent advances for the semi-positive Hermitian form are contained in  \cite{GL21b}. The equation has found numerous geometric applications \cite{To15}, \cite{Ng16}, \cite{Di16}, \cite{Ni17}, \cite{T18} \cite{KT21} and \cite{GL21a}.

To obtain the weak solutions one often uses the stability of potentials of the approximating equations. 
It is well-known that the $L^p$-convergence of potentials does not imply the weak convergence of corresponding Monge-Amp\`ere measures. Furthermore, in the Hermitian setting there are fewer such criterions available as  compared to the K\"ahler manifolds.  In this note, we prove such a criterion  under suitable assumptions: of uniform boundedness and domination by Monge-Amp\`ere measures of another "nice" sequence. Recall  the Bedford-Taylor capacity: for a Borel subset $E\subset X$,
$$	cap_\om(E) := \sup\left\{\int_E (\om + dd^cw)^n : w\in PSH(X, \om), 0\leq w\leq 1\right\}.
$$
Here $PSH(X,\om)$ denotes the set of all $\om$-plurisubharmonic ($\om$-psh) functions on $X$. 
A sequence $\{\vphi_j\}_{j=1}^\infty \subset PSH(X,\om)$ is said converge in capacity to $\vphi\in PSH(X,\om)$ if for a given $\veps>0$, 
$$	
	\lim_{j\to +\infty} cap_\om(|\vphi_j -\vphi|>\veps) =0.
$$
The main result of this expository note is as follows.

\begin{thm} \label{thm:main} Let $\{u_j\}$ be a uniformly bounded sequence of $\om$-psh functions. Assume $(\om +dd^c u_j)^n \leq C (\om + dd^c \vphi_j)^n$  for some uniformly bounded sequence $\{\vphi_j\}$ such  that $\vphi_j \to \vphi \in PSH(X,\om)$ in capacity. If $u_j \to u \in PSH(X,\om) \cap L^\infty(X)$ in $L^1(X)$, then  a subsequence of $(\om+ dd^c u_j)^n$ converges weakly  to $(\om +dd^cu)^n$. 
\end{thm}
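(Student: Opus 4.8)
\textit{Proof proposal.} The plan is to reduce the weak convergence to a convergence-in-capacity statement and then to establish that convergence by separately controlling the ``upper'' and ``lower'' level sets of $u_j-u$, the latter being where the domination hypothesis is decisive. First I would pass to a subsequence: since $\{u_j\}$ is uniformly bounded, the measures $\mu_j:=(\om+dd^cu_j)^n$ have uniformly bounded total mass, so after extraction we may assume $\mu_j\rightharpoonup\mu$ weakly for some positive measure $\mu$ and, simultaneously (using $u_j\to u$ in $L^1$), that $u_j\to u$ almost everywhere. The goal becomes $\mu=(\om+dd^cu)^n$. I would deduce this from the Bedford--Taylor-type continuity theorem, valid for uniformly bounded $\om$-psh potentials on a compact Hermitian manifold: if $u_j\to u$ in capacity, then $(\om+dd^cu_j)^n\to(\om+dd^cu)^n$ weakly. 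Thus it suffices to show that this subsequence satisfies $cap_\om(\{|u_j-u|>\veps\})\to 0$ for every $\veps>0$.

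For the upper bound I would not use the domination at all. Set $g_j:=(\sup_{k\ge j}u_k)^*$; these are $\om$-psh, decrease in $j$, and by the Hartogs Lemma together with the $L^1$-convergence the limit is $(\limsup_k u_k)^*=u$, so $g_j\downarrow u$. A decreasing sequence of uniformly bounded $\om$-psh functions converges in capacity to its limit, hence $cap_\om(\{g_j>u+\veps\})\to 0$; since $u_j\le g_j$ this gives $cap_\om(\{u_j>u+\veps\})\to 0$.

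The lower bound $cap_\om(\{u_j<u-\veps\})\to 0$ is the heart of the matter, and here the hypothesis $\mu_j\le C(\om+dd^c\vphi_j)^n=:C\nu_j$ enters. Fixing an $\om$-psh competitor $w$ with $0\le w\le 1$ and setting $b:=u-\veps+\tfrac{\veps}{2}w$, I would use the inclusions $\{u_j<u-\veps\}\subset\{u_j<b\}\subset\{u_j<u-\tfrac{\veps}{2}\}$ together with $\om+dd^cb\ge\tfrac{\veps}{2}(\om+dd^cw)$, so that by the comparison principle in its Hermitian form (with correction terms arising from $d\om\ne 0$),
$$\Big(\tfrac{\veps}{2}\Big)^n\int_{\{u_j<u-\veps\}}(\om+dd^cw)^n\le\int_{\{u_j<u-\veps/2\}}\mu_j+(\text{corr.})\le C\int_{\{u_j<u-\veps/2\}}\nu_j+(\text{corr.}).$$
Taking the supremum over $w$ bounds $cap_\om(\{u_j<u-\veps\})$ by $C(\tfrac{2}{\veps})^n\,\nu_j(\{u_j<u-\tfrac{\veps}{2}\})$ plus correction terms. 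I note that the uniform bound on $\{\vphi_j\}$ also yields a uniform capacity-domination $\nu_j(E)\le C'\,cap_\om(E)$ for every Borel $E$, so no $\nu_j$ can concentrate mass on a set of small capacity.

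The main obstacle is to show that $\nu_j(\{u_j<u-\tfrac{\veps}{2}\})\to 0$. The sets $A_j:=\{u_j<u-\tfrac{\veps}{2}\}$ shrink in Lebesgue measure (indeed $\limsup_j A_j$ is Lebesgue-null by the a.e.\ convergence), but this alone does not suffice: the $\nu_j$, while of bounded mass, need not be uniformly absolutely continuous with respect to Lebesgue measure, and weak convergence $\nu_j\rightharpoonup\nu$ gives no control over integrals on moving Borel sets. This is exactly the point where the full strength of $\vphi_j\to\vphi$ in capacity must be used rather than mere weak convergence of $\nu_j$: I would exploit the capacity convergence of the $\vphi_j$, via the quasi-continuity of the potentials and the uniform capacity-domination above, to compare $\nu_j$ on $A_j$ with the fixed limiting measure on a slightly enlarged set and thereby force the mass to vanish. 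Carrying this out, while simultaneously controlling the Hermitian correction terms produced by integration by parts against the non-closed form $\om$ (using the integration-by-parts estimates available for uniformly bounded potentials), is the technical core. Once the lower bound is established, the two estimates give $u_j\to u$ in capacity, and the theorem follows.
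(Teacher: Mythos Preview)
Your overall plan---reduce to convergence in capacity and then invoke the Bedford--Taylor continuity theorem---attempts to prove something strictly stronger than the theorem, and this is where the gap lies. In the Hermitian comparison principle the ``(corr.)'' terms on the right-hand side are not harmless: with $\vphi=u_j$ one obtains (see the paper's inequality \eqref{eq:comparison-V1})
\[
\Big(\tfrac{\veps}{4}\Big)^n cap_\om(\{u_j<u-\veps\}) \le \int_{\{u_j<u-\veps/2\}}\om_{u_j}^n \;+\; C\sum_{k=0}^{n}\int_{\{u_j<u-\veps/2\}}\om_{u_j}^k\wedge\om^{n-k}.
\]
The domination $(\om+dd^cu_j)^n\le C(\om+dd^c\vphi_j)^n$ controls only the top-degree term $k=n$; it says nothing about the mixed measures $\om_{u_j}^k\wedge\om^{n-k}$ for $1\le k\le n-1$, and ``integration-by-parts estimates for uniformly bounded potentials'' only bound their \emph{total} mass, not their mass on the moving sublevel sets $\{u_j<u-\veps/2\}$. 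In dimension $n=2$ the only correction term is $k=0$, controlled by the $L^1$ convergence, and indeed the paper records (Proposition~\ref{prop:cap-convergence}) that convergence in capacity does follow then---but explicitly states that the question is open for $n\ge 3$. So your proposed route cannot be closed in general.

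The paper circumvents this by \emph{not} establishing convergence in capacity. Instead it (i) applies your sketched argument with the $\vphi_j$ in place of the $u_j$ to obtain $\int_X|u_j-u|(\om+dd^c\vphi_j)^n\to 0$ (this is Lemma~\ref{lem:uniform-L1-convergence}, which is exactly where the capacity convergence of the $\vphi_j$ is consumed), and then uses the domination to get $\int_X|u_j-u|\,\om_{u_j}^n\to 0$; (ii) introduces the Hartogs modification $w_s=\max\{u_{j_s},u-1/s\}$, which \emph{does} converge to $u$ in capacity, so $\om_{w_s}^n\to\om_u^n$ weakly; and (iii) compares $\om_{u_{j_s}}^n$ with $\om_{w_s}^n$ directly, using that the two measures coincide on $\{u_{j_s}>u-1/s\}$ by the locality of the Monge--Amp\`ere operator, while on the complement $\{u_{j_s}\le u-1/s\}$ both have mass at most $s\int_X|u-u_{j_s}|\,\om_{u_{j_s}}^n$ (resp.\ $\om_{w_s}^n$), which is made $\le 1/s$ along a subsequence. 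This bypasses the mixed-degree correction terms entirely. Incidentally, your ``main obstacle'' $\nu_j(\{u_j<u-\veps/2\})\to 0$ is the easy part---it follows from $\int_X|u_j-u|\,\nu_j\to 0$---and is not where the argument breaks.
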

This is a generalization of \cite[Lemma~2.1]{CK06} from the local setting to compact Hermitian manifolds that is pointed out in the proof of  \cite[Lemma~2.11]{KN22} (see also \cite{KN21}). On compact K\"aher manifolds there are stronger results \cite[Theorem~2.1]{Hiep08} and \cite{DH12}. We will see that if the dimension $n=2$, then we have a similar result (Proposition~\ref{prop:cap-convergence}). However, we do not know if it still holds for dimensions $n\geq 3$.
It is also possible to extend the  theorem to  the case of Hermitian semi-positive $(1,1)$-forms $\al$ and we will consider it in a future  paper. 

An application of the theorem is that it provides a shorter proof of the existence of bounded $\om$-psh solutions to Monge-Amp\`ere equations with the right hand side in $L^p(X)$,  $p>1$.

\begin{cor} Let $0\leq f \in L^p(X)$ and $p>1$. Then, there exists a bounded function $u \in PSH(X,\om) \cap L^\infty(X)$ and a constant $c>0$ solving $\om_u^n = c f \om^n.$
\end{cor}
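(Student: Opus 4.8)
The plan is to run the standard approximation scheme for the complex Monge-Amp\`ere equation and to invoke Theorem~\ref{thm:main} precisely at the step where one passes to the limit in the Monge-Amp\`ere measures. First I would regularize the density: choose smooth $f_j>0$ with $f_j\to f$ in $L^p(X)$. Since the case $f\equiv 0$ is trivial (take $u\equiv 0$), I may assume $\int_X f\,\om^n>0$, so $\int_X f_j\,\om^n\to\int_X f\,\om^n>0$. For each $j$ the smooth existence theorem of Tosatti--Weinkove \cite{TW10b} provides a smooth $u_j\in PSH(X,\om)$ and a constant $c_j>0$ with $(\om+dd^cu_j)^n=c_jf_j\,\om^n$, normalized by $\sup_X u_j=0$.

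Next I would collect the uniform estimates needed to extract a limit. Integrating the equation gives $c_j\int_X f_j\,\om^n=\int_X(\om+dd^cu_j)^n$, and on a compact Hermitian manifold the a priori estimates for the equation with $L^p$ density, $p>1$ (see \cite{DK12,KN1}), control simultaneously the total masses, the constants, and the sup-norms; this yields $\|u_j\|_{L^\infty(X)}\le M$ and $0<c_0\le c_j\le c_1$ uniformly in $j$, where $\int_X f_j\,\om^n\to\int_X f\,\om^n>0$ is used to keep $c_j$ away from $0$. By the $L^1$-compactness of uniformly bounded families of $\om$-psh functions, after passing to a subsequence I obtain $u_j\to u$ in $L^1(X)$ with $u\in PSH(X,\om)\cap L^\infty(X)$, and $c_j\to c>0$.

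To identify the limiting measure I would apply Theorem~\ref{thm:main} to $\{u_j\}$. Uniform boundedness and the convergence $u_j\to u$ in $L^1(X)$ are already in hand, so everything reduces to producing a comparison sequence $\{\vphi_j\}$: uniformly bounded, convergent in capacity to some $\vphi\in PSH(X,\om)$, and dominating in the sense $(\om+dd^cu_j)^n\le C(\om+dd^c\vphi_j)^n$. This is the main obstacle and the only genuinely nontrivial point, since $u_j\to u$ merely in $L^1(X)$ is too weak to take $\vphi_j=u_j$. I would first pass to a subsequence with $\|f_j-f\|_{L^p(X)}\le 2^{-j}$, so that $g:=f+\sum_k|f_k-f|\in L^p(X)$ and $0\le f_j\le g$ for every $j$, whence $c_jf_j\,\om^n\le C'g\,\om^n$. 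It then suffices to dominate $g\,\om^n$ by the Monge-Amp\`ere measures of an admissible sequence. For this I would solve auxiliary equations $(\om+dd^c\vphi_j)^n=c'_jg_j\,\om^n$ with smooth densities $g_j\ge f_j$ chosen to decrease to $g$ (again via \cite{TW10b}, the uniform $L^\infty$ estimate keeping $\{\vphi_j\}$ uniformly bounded); the comparison principle on $(X,\om)$ should then force $\{\vphi_j\}$ to be monotone, and a uniformly bounded monotone sequence of $\om$-psh functions converges in capacity. The delicate point, and where I expect the real work, is the bookkeeping of the normalizing constants $c_j,c'_j$ against the Hermitian comparison principle.

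Granting such a comparison sequence, Theorem~\ref{thm:main} yields, along a further subsequence, $(\om+dd^cu_j)^n\to(\om+dd^cu)^n$ weakly. On the other hand $(\om+dd^cu_j)^n=c_jf_j\,\om^n\to cf\,\om^n$ weakly, because $c_j\to c$ and $f_j\to f$ in $L^1(X)$. Since a sequence of measures has at most one weak limit, the two limits coincide, giving $(\om+dd^cu)^n=cf\,\om^n$ with $u\in PSH(X,\om)\cap L^\infty(X)$ and $c>0$, which is the assertion.
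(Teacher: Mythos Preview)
Your overall strategy---approximate, extract a bounded limit, identify the Monge--Amp\`ere measure---is exactly right, but you take an unnecessary detour at the key step and leave a real gap there. The paper does \emph{not} invoke Theorem~\ref{thm:main} at all; it appeals directly to Lemma~\ref{lem:weak-convergence}, whose sole hypothesis $\int_X |u_j-u|\,\om_{u_j}^n\to 0$ is immediate here: since $\om_{u_j}^n=c_jf_j\,\om^n$ with $c_j\le C_0$ and $\|f_j\|_{L^p}$ bounded, H\"older gives
\[
\int_X |u_j-u|\,c_jf_j\,\om^n \le C\,\|u_j-u\|_{L^q(\om^n)}\le C'\,\|u_j-u\|_{L^1(\om^n)}^{1/q},
\]
the last inequality using $\|u_j-u\|_{L^\infty}\le 2M$. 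No comparison sequence $\{\vphi_j\}$ is needed.

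Your construction of $\{\vphi_j\}$ is where the proposal actually breaks. On a Hermitian (non-K\"ahler) manifold the comparison principle does not force monotonicity of solutions from monotonicity of the densities, because each auxiliary equation carries its own normalizing constant $c'_j$ and the Hermitian comparison inequality \eqref{eq:comparison-V1} has extra lower-order terms; you yourself flag this as ``the delicate point,'' and as stated it is not justified. Moreover, the domination $(\om+dd^cu_j)^n\le C(\om+dd^c\vphi_j)^n$ would require $c_jf_j\le C c'_jg_j$, i.e.\ control on $c_j/c'_j$, which you do not establish. A minor additional point: the paper obtains only $c_j\le C_0$ a priori (via \cite[Eq.~(5.12)]{KN1}) and deduces $c>0$ \emph{a posteriori} from the boundedness of $u$ (\cite[Remark~5.7]{KN1}); your claimed a priori lower bound $c_j\ge c_0>0$ is not needed and not quite what the cited references give directly.
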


\begin{proof} Approximating $f_j \to f$ by smooth positive function $f_j$. By the Tosatti-Weinkove theorem \cite{TW10b} there are  $u_j \in PSH(X,\om) \cap C^\infty(X)$ and $c_j>0$ solving
$$
	(\om + dd^c u_j)^n = c_j f_j \om^n, \quad  \sup_X u_j =0.
$$
Using \cite[Eq. (5.12)]{KN1} we have $c_j \leq C_0$ with a uniform $C_0>0$. Therefore, by \cite{DK12} we have  $- C_1 \leq u_j \leq 0$.  By passing to a subsequence we may assume that $u_j \to u$ in $L^1(X)$ and $c_j \to c \geq 0$. Then, 
$$
	\int_{X} |u_j-u| (\om+ dd^c u_j)^n = \int_X |u-u_j| c_j f_j \om^n  \leq  C \|u_j -u\|_{L^1(X)}^\frac{1}{q},
$$
where $1/p+1/q =1$.
Hence, by Lemma~\ref{lem:weak-convergence} (below) there is a subsequence of $\{u_j\}$, for simplicity still denoted by $\{u_j\}$, such that $\om_{u_j}^n \to \om_u^n = c f \om^n$ weakly. Note that since $u$ is bounded,  it follows from \cite[Remark~5.7]{KN1} that $c>0$.
\end{proof}

Another application is proving the existence of bounded solutions for  a more general class of measures.
A positive Radon measure $\mu$ on $X$ is well dominated by capacity or belong to $\cF(X,h)$ (see \cite{K05})  if
\[
\label{eq:dominate}
	\mu(E) \leq F_h( cap_\om (E)),
\]
for any Borel set $E \subset X$, where $F_h(x) = x/h(x^{-\frac{1}{n}})$ for some increasing 
function $h : \mathbb R_+ \rightarrow (0, \infty ) $ satisfying
\[
\label{eq:admissible}
	\int_1^\infty \frac{1}{x [h(x) ]^{\frac{1}{n}} }  \, dx < +\infty.
\]

\begin{cor} Let $\mu$ be a positive Radon measure with $\mu(X) >0$. Assume that $\mu$ is well dominated by capacity. Then, there exists a bounded solution $u\in PSH(X,
\om)$ and $c>0$ solving 
$
	\om_u^n = c \mu.
$
\end{cor}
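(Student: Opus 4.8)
The plan is to reduce the statement to Theorem~\ref{thm:main} (or Lemma~\ref{lem:weak-convergence}) by the same regularize-solve-pass-to-the-limit scheme used in the first corollary, the only new point being that the Hölder bound on $\int_X|u_j-u|\,\om_{u_j}^n$ there is now replaced by control coming from the domination \eqref{eq:dominate}. First I would normalize so that $\mu(X)=1$ and approximate $\mu$ weakly by measures $\mu_j=g_j\om^n$ with $0<g_j\in C^\infty(X)$. The essential requirement is that this approximation inherit the domination by capacity \emph{uniformly} in $j$, i.e. $\mu_j(E)\le F_h(cap_\om(E))$ for every Borel set $E$, possibly after enlarging $h$ within the admissible class \eqref{eq:admissible}. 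Such an approximation is obtained by mollifying $\mu$ in coordinate charts and glueing by a partition of unity; the point to verify is that the capacity of a Borel set does not drop under this regularization by more than a fixed factor, so that \eqref{eq:dominate} survives with a controlled $h$.

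Next I would invoke the Tosatti--Weinkove theorem \cite{TW10b} to solve, for each $j$, the smooth equation $\om_{u_j}^n=c_j\mu_j$ with $\sup_X u_j=0$, $u_j\in PSH(X,\om)\cap C^\infty(X)$ and $c_j>0$. The uniform upper bound $c_j\le C_0$ follows from \cite[Eq.~(5.12)]{KN1} exactly as before. The crucial uniform estimate is $-C_1\le u_j\le 0$: this is the Ko\l odziej-type a priori $L^\infty$-bound, available precisely because the right-hand sides $c_j\mu_j\le C_0\mu_j$ are dominated by capacity uniformly in $j$ through the admissible $h$, which is the content of the boundedness/stability results on Hermitian manifolds in \cite{DK12} and \cite{KN1}. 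Passing to a subsequence I then obtain $u_j\to u$ in $L^1(X)$ with $u\in PSH(X,\om)\cap L^\infty(X)$ and $c_j\to c\ge 0$.

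Finally I would feed $\{u_j\}$ into Theorem~\ref{thm:main}. The difficulty is to supply the dominating sequence $\{\vphi_j\}$: since $\om_{u_j}^n=c_j\mu_j$, I would look for uniformly bounded $\vphi_j$ with $c_j\mu_j\le C\,\om_{\vphi_j}^n$ and $\vphi_j\to\vphi$ in capacity, for instance potentials of slightly enlarged, better-behaved data $\tilde\mu_j\ge\mu_j$ chosen so that the solutions $\vphi_j$ form a monotone, hence capacity-convergent (by Bedford--Taylor), sequence. With such $\{\vphi_j\}$ in hand the hypotheses of Theorem~\ref{thm:main} are met and it yields $\om_{u_j}^n\to\om_u^n$ weakly along a subsequence; on the other hand $\om_{u_j}^n=c_j\mu_j\to c\mu$ weakly along the same subsequence because $\mu_j\to\mu$ and $c_j\to c$. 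Comparing the two limits gives $\om_u^n=c\mu$. Positivity of $c$ is then forced by boundedness of $u$ as in the first corollary (cf. \cite[Remark~5.7]{KN1}): $c=0$ would give $\om_u^n=0$, which is impossible for a bounded potential, so $c>0$ and $u$ is the desired solution.

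The main obstacle is the last step, namely producing the dominating sequence $\{\vphi_j\}$ converging in capacity. Note that $L^1$-convergence of the uniformly bounded potentials $u_j$ does \emph{not} by itself yield convergence in capacity --- otherwise the theorem would be vacuous, since convergence in capacity of bounded potentials already forces weak convergence of the Monge--Amp\`ere measures --- so one cannot simply take $\vphi_j=u_j$; the capacity convergence must instead be extracted from the structure of the capacity-dominated approximation, either by arranging $\tilde\mu_j$ so that the associated potentials are monotone, or via a stability estimate for capacity-dominated data. In dimension $n=2$ this is exactly where Proposition~\ref{prop:cap-convergence} would apply, whereas for $n\ge 3$ the argument has to be organized so as to use only the weak convergence guaranteed by Theorem~\ref{thm:main}. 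Securing the uniform domination \eqref{eq:dominate} in the regularization step is the other point requiring care.
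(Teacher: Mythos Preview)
The paper's own proof is a one-line deferral to \cite[Theorem~3.1]{KN21}; it does not give a self-contained argument, so there is no detailed approach to compare against. Your outline is in fact closer to what the introduction \emph{advertises}, namely that this corollary is an application of Theorem~\ref{thm:main}, and the first three steps (smoothing $\mu$ with uniform domination by capacity, solving via Tosatti--Weinkove, extracting the uniform $L^\infty$-bound from \cite{DK12,KN1}, and passing to an $L^1$-limit $u$) are exactly how the argument in \cite{KN21} is organized.

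There is, however, a genuine gap at the point you yourself flag: the construction of the dominating sequence $\{\vphi_j\}$ converging in capacity. Your two suggestions do not close it as written. Arranging a monotone family $\tilde\mu_j\ge\mu_j$ does \emph{not} yield monotone potentials $\vphi_j$: on a Hermitian manifold there is no global comparison principle turning an inequality between right-hand sides into an inequality between normalized solutions, especially since the constants in $\om_{\vphi_j}^n=\tilde c_j\tilde\mu_j$ vary with $j$. The alternative of ``stability for capacity-dominated data'' is precisely what \cite{KN7,KN21} supply, but then you are invoking the external machinery that the paper's proof already cites, not Theorem~\ref{thm:main}.

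If you want to stay inside this paper, the cleaner route is to bypass Theorem~\ref{thm:main} and go straight to Lemma~\ref{lem:weak-convergence}: it suffices to show $\int_X|u_j-u|\,\om_{u_j}^n\to 0$. Since $\om_{u_j}^n=c_j\mu_j$ with $c_j\le C_0$ and the $\mu_j$ are \emph{uniformly} dominated by capacity, this family of measures is uniformly absolutely continuous with respect to $cap_\om$; combining this with the convergence in capacity of $\max\{u_j,u\}\to u$ (Hartogs) and a Cegrell-type argument as in Lemma~\ref{lem:L1-norm-convergence}--Corollary~\ref{cor:L1-convergence}, but run uniformly over the family $\{\mu_j\}$, gives the desired $L^1(\mu_j)$-convergence. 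This is essentially what the proof of \cite[Theorem~3.1]{KN21} does, and it is the missing ingredient your sketch needs.
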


\begin{proof} It follows easily from the proof of \cite[Theorem~3.1]{KN21}. 
\end{proof}
\begin{remark} The solutions obtained in Corollary~2.7 and Corollary~2.8 are H\"older continuous and continuous, respectively. However, to get these statements, we need a stronger stability of solutions as in \cite{KN1, KN7}.
\end{remark}

\medskip

{\em Acknowledgement.} The first author is partially supported by  grant  no. \linebreak 2021/41/B/ST1/01632 from the National Science Center, Poland. The second author is  partially supported by the start-up grant G04190056 of KAIST and the National Research Foundation of Korea (NRF) grant  no. 2021R1F1A1048185.

\section{Proof of Theorem~\ref{thm:main} and convergence in capacity}

This section is devoted to  prove the main theorem.
Since all functions under consideration are uniformly bounded, by normalizing 
\[ \sup_X u_j = \sup_X \vphi_j =0,
\]
they belong to the  subset $\cP_0$ of $PSH(X,\om)$ given by
$$\cP_0 =\left\{  v\in PSH (X,\om) \cap L^\infty(X):  \sup_X v=0 \right\}.$$
Let us denote  $dV = \om^n$ the volume form of $X$.
We have the following result essentially due to Cegrell \cite{Ce98},  a detailed proof is  given in \cite[Lemma~2.1]{KN21}. 
\begin{lem} \label{lem:L1-norm-convergence}
Let $d\la$ be a finite positive Radon measure on $X$ vanishing on pluripolar sets. Suppose moreover that $\{u_j\} \subset  \cP_0$  converges $dV$-a.e. to  $u \in \cP_0$. Then there exists a subsequence  $\{u_{j_s}\} \subset \{u_j\}$ such that
$$  \lim_{j_s\to +\infty} \int_X u_{j_s} d\la = \int_X u d\la.$$
\end{lem}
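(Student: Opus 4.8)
The plan is to prove the two inequalities $\limsup_{j}\int_X u_j\,d\la\le \int_X u\,d\la$ and $\liminf_{j_s}\int_X u_{j_s}\,d\la\ge\int_X u\,d\la$ along a suitable subsequence, the first being elementary and valid for \emph{any} finite measure, the second carrying all the difficulty and using that $\la$ charges no pluripolar set. Since the functions are uniformly bounded, say $-M\le u_j\le 0$, and $u_j\to u$ $dV$-a.e., I would first record that $u_j\to u$ in $L^1(X)$ by dominated convergence. For the upper bound I set $\ov{u}_k:=(\sup_{j\ge k}u_j)^*$; these are $\om$-psh, uniformly bounded and decrease in $k$. Using that the regularization changes the supremum only on a pluripolar (hence $dV$-null) set, together with $\limsup_j u_j=u$ a.e., one checks that $\ov u_k\downarrow u$ everywhere (two $\om$-psh functions equal $dV$-a.e. coincide). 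As $u_j\le\ov u_k$ for $j\ge k$, dominated convergence gives $\limsup_j\int_X u_j\,d\la\le\lim_k\int_X\ov u_k\,d\la=\int_X u\,d\la$ for every finite $\la$.

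For the lower bound I would first dispose of the ``upper'' oscillation by the maximum trick: put $v_j:=\max(u_j,u)\in \cP_0$. Since $v_j\to u$ a.e. the upper bound applies to $\{v_j\}$ as well, while $v_j\ge u$ forces $\int_X v_j\,d\la\ge\int_X u\,d\la$; hence $\int_X v_j\,d\la\to\int_X u\,d\la$, i.e. $\int_X(u_j-u)^+\,d\la\to 0$. Because $\int_X(u-u_j)\,d\la=\int_X(u-u_j)^+\,d\la-\int_X(u_j-u)^+\,d\la$, everything reduces to showing $\int_X(u-u_j)^+\,d\la\to 0$, equivalently $\la(\{u_j<u-\veps\})\to 0$ for each $\veps>0$ (then a layer-cake integration finishes). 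This is the step where the hypothesis on $\la$ must enter: the set $\{u_j<u-\veps\}$ is $dV$-small but need not be $\la$-small unless its essential upper limit is pluripolar.

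The heart of the argument, and the step I expect to be the main obstacle, is to upgrade the $L^1(dV)$ convergence to convergence in capacity. The upper half, $cap_\om(\{u_j>u+\veps\})\to 0$, is free: the decreasing sequence $\ov u_k\downarrow u$ converges in capacity, and $\{u_j>u+\veps\}\subset\{\ov u_k>u+\veps\}$ for $j\ge k$. The lower half, $cap_\om(\{u_j<u-\veps\})\to 0$, has no plurisubharmonic envelope to lean on; here I would invoke a Chern--Levine--Nirenberg/stability type inequality, using the uniform bound, of the form $\sup_{0\le w\le 1}\int_X|u_j-u|\,(\om+dd^c w)^n\le C\,\|u_j-u\|_{L^1(X)}^{\al}$ with some $\al=\al(n)>0$, together with $\mathbf 1_{\{u_j<u-\veps\}}\le \veps^{-1}(u-u_j)^+$, to conclude $cap_\om(\{u_j<u-\veps\})\le \veps^{-1}\sup_w\int_X|u_j-u|\,(\om+dd^c w)^n\to 0$. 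Granting capacity convergence, I pass to a subsequence $u_{j_s}$ with $\sum_s cap_\om(\{|u_{j_s}-u|>1/m\})<\infty$ for every $m$; by subadditivity of capacity (Borel--Cantelli) the set where $u_{j_s}\not\to u$ is then pluripolar, so by hypothesis $u_{j_s}\to u$ $\la$-a.e., and dominated convergence (finite $\la$, uniform bound) yields $\int_X u_{j_s}\,d\la\to\int_X u\,d\la$. The delicate point throughout is precisely the lower capacity estimate: plurisubharmonicity permits free downward spikes only on pluripolar sets, and quantifying this through the uniform bound is what converts $dV$-smallness of the sublevel sets into $\la$-smallness.
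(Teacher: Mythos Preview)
Your upper-bound argument via $\ov u_k=(\sup_{j\ge k}u_j)^*\searrow u$ and the reduction to $\int_X(u-u_j)^+\,d\la\to 0$ are fine. The gap is precisely at the step you flag as the main obstacle: the inequality
\[
\sup_{0\le w\le 1}\int_X|u_j-u|\,\om_w^{\,n}\ \le\ C\,\|u_j-u\|_{L^1(X)}^{\al}
\]
is \emph{not} available on general compact Hermitian manifolds. Such an estimate would yield, for uniformly bounded sequences, that $L^1$-convergence implies convergence in capacity; the paper states explicitly (comment after Theorem~\ref{thm:main} and Proposition~\ref{prop:cap-convergence}) that this is only known for $n=2$ and is open for $n\ge 3$. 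The classical proof of this bound on K\"ahler manifolds proceeds by repeated integration by parts against $dd^c w$; when $d\om\neq 0$ each step produces torsion terms $d\om\wedge(\cdots)$ and $dd^c\om\wedge(\cdots)$ that one cannot absorb without an additional hypothesis --- this is exactly why Theorem~\ref{thm:main} assumes domination by a sequence $\vphi_j$ that already converges in capacity, and why Lemma~\ref{lem:uniform-L1-convergence} needs that hypothesis. So your route, as written, does not close in the Hermitian setting.

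The paper avoids capacity convergence altogether by Cegrell's argument: the $u_j$ are bounded in $L^2(d\la)$, so a subsequence converges weakly to some $v\in L^2(d\la)$; by Banach--Saks the Ces\`aro means $F_k=\tfrac1k\sum_{i\le k}u_{j_i}$ converge strongly in $L^2(d\la)$, hence (after a further extraction) $\la$-a.e.\ to $v$ and $dV$-a.e.\ to $u$. Then $(\sup_{s>t}F_{k_s})^*\searrow u$ everywhere, and since the upper regularization changes the sup only on a pluripolar set --- which $\la$ does not charge --- one gets $\int_X v\,d\la=\int_X u\,d\la$, while weak convergence gives $\int_X u_{j_s}\,d\la\to\int_X v\,d\la$. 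This uses only that $\la$ puts no mass on pluripolar sets and never touches $cap_\om$, which is why it survives the passage from K\"ahler to Hermitian.
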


An immediate consequence is

\begin{cor}\label{cor:L1-convergence}  There exists a subsequence, still denoted by $\{u_j\}$, such that
$$ \lim_{j\to \infty} \int_X |u_j - u| d\la  =0.
$$
\end{cor}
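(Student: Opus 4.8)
The plan is to reduce everything to two applications of Lemma~\ref{lem:L1-norm-convergence} by means of the elementary pointwise identity
$$|u_j - u| = 2\max(u_j,u) - u_j - u,$$
which comes from $\max(a,b) = \tfrac12(a+b+|a-b|)$. Integrating against $d\la$ gives
$$\int_X |u_j - u|\, d\la = 2\int_X \max(u_j,u)\, d\la - \int_X u_j\, d\la - \int_X u\, d\la,$$
so it suffices to show that (along a common subsequence) both $\int_X u_j\, d\la$ and $\int_X \max(u_j,u)\, d\la$ converge to $\int_X u\, d\la$. The first of these is exactly the content of Lemma~\ref{lem:L1-norm-convergence} applied to the sequence $\{u_j\}$, which converges $dV$-a.e.\ to $u$.

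For the second term, I would set $w_j = \max(u_j,u)$ and observe that this sequence again satisfies the hypotheses of Lemma~\ref{lem:L1-norm-convergence}. Indeed $w_j \in PSH(X,\om)\cap L^\infty(X)$ as a maximum of two $\om$-psh functions; since $u\le 0$ and $\sup_X u_j = 0$ we get $w_j \le 0$ and $\sup_X w_j \ge \sup_X u_j = 0$, hence $w_j \in \cP_0$; and continuity of $\max$ forces $w_j \to u$ $dV$-a.e. Applying the lemma to $\{w_j\}$ yields a subsequence along which $\int_X w_j\, d\la \to \int_X u\, d\la$. Extracting subsequences in the usual order---first thin out $\{u_j\}$ so that $\int_X u_j\, d\la \to \int_X u\, d\la$, then thin out further along $\{w_j\}$ (the $dV$-a.e.\ convergence survives passing to a subsequence)---produces a single subsequence, still denoted $\{u_j\}$, on which both integral limits hold simultaneously, and the displayed identity then gives $\int_X|u_j-u|\,d\la \to 0$.

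The only genuine subtlety here, and the reason this is not a one-line dominated-convergence argument, is that the hypothesis provides convergence only $dV$-a.e.\ while the conclusion concerns $L^1(d\la)$, and $d\la$ may be singular with respect to $dV = \om^n$. This is precisely the gap that Lemma~\ref{lem:L1-norm-convergence} is designed to bridge, through the Banach--Saks argument combined with the assumption that $d\la$ does not charge pluripolar sets. The $\max$-trick is what lets me feed the problem into that lemma twice, since $\max(u_j,u)$ remains in the class $\cP_0$ where the lemma operates; had I instead tried to control $|u_j-u|$ directly, I would have no $\om$-psh structure to exploit. Thus the main (already-resolved) obstacle is inherited from Lemma~\ref{lem:L1-norm-convergence}, and the corollary is indeed an immediate consequence once the absolute value is decomposed as above.
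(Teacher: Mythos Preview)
Your proof is correct and follows essentially the same approach as the paper: both apply Lemma~\ref{lem:L1-norm-convergence} twice, once to $\{u_j\}$ and once to $\{\max(u_j,u)\}$, and then use the identity $\max(u_j,u)=\tfrac12(u_j+u+|u_j-u|)$ to conclude. Your write-up is in fact more detailed than the paper's, verifying explicitly that $\max(u_j,u)\in\cP_0$ and spelling out the order of subsequence extraction.
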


\begin{proof}Applying Lemma~\ref{lem:L1-norm-convergence} twice to the sequences $\{u_j\}$ and $\max\{u_j, u\}$,  we have (still denoting by $\{u_j\}$  the resulting subsequence)
$$\lim_{j\to \infty} \int_X u_j d\la = \int_X u d\la, \quad \lim_{j\to \infty} \int_X \max\{u_j, u\} d\la = \int_X u d\la.$$
Since $\max\{u,u_j\} = (u_j+u + |u_j-u|)/2$, integrating both sides and using the previous equations we get that
 $u_j \to u$ in $L^1(d\la).$
\end{proof}

The next result  is a global analogue of \cite[Lemma~2.3]{KN22} which says that under the assumption of Theorem~\ref{thm:main}, the sequence converges uniformly in $L^1$-norm with respect to a family of measures.  On compact complex manifolds the Cegrell inequality will not be needed. For the reader convenience, we give details of the proof.

\begin{lem} \label{lem:uniform-L1-convergence} Let $\{w_j\}_{j=1}^\infty \subset \cP_0$ be a uniformly bounded sequence  that converges in capacity to $w\in \cP_0$. Assume   $\sup_j \int_X (\om+ dd^c w_j)^n \leq C_1$ for some $C_1>0.$ Then, 
$$
	\lim_{j\to \infty}  \int_X |u- u_j| (\om+ dd^c w_j)^k \wed \om^{n-k} = 0 \quad \text{for } k=0,1,...,n.
$$
\end{lem}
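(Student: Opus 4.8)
The plan is to separate the estimate into a part that uses only the hypotheses on $\{w_j\}$ and a part that controls $u-u_j$ on small sets. Write $\om_{w_j}:=\om+dd^cw_j$ and, after the normalization $\sup_Xu_j=\sup_Xw_j=0$ with a common bound $-M\le u_j,u,w_j\le0$, decompose $|u-u_j|=(u_j-u)^++(u-u_j)^+$. For either nonnegative factor $g_j$ and any $\veps>0$ one has
\[
\int_Xg_j\,\om_{w_j}^k\wed\om^{n-k}\le\veps\int_X\om_{w_j}^k\wed\om^{n-k}+2M\int_{\{g_j>\veps\}}\om_{w_j}^k\wed\om^{n-k},
\]
so everything reduces to three points: a uniform mass bound $\sup_j\int_X\om_{w_j}^k\wed\om^{n-k}\le C_k$, a uniform domination $\int_E\om_{w_j}^k\wed\om^{n-k}\le C\,cap_\om(E)$, and the statement that $cap_\om(\{g_j>\veps\})\to0$ as $j\to\infty$.

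First I would handle the measure side. The mass bound follows by induction on $k$ from integration by parts on the closed manifold $X$: the only difference from the K\"ahler case is that $\om$ is not closed, but the extra torsion terms carry no derivatives of $w_j$, so they are absorbed by $\|w_j\|_\infty\le M$ and the top--mass bound $\sup_j\int_X\om_{w_j}^n\le C_1$. The domination by capacity is the Chern--Levine--Nirenberg inequality combined with the definition of $cap_\om$ (note that for $-1\le v\le0$ one has $\int_E\om_v^n\le cap_\om(E)$); since $\{w_j\}$ is uniformly bounded the constant $C$ is uniform in $j$. This settles the first two points at once.

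The upward term $g_j=(u_j-u)^+$ is the easy half. As $u_j\to u$ in $L^1$, the regularizations $U_m:=(\sup_{l\ge m}u_l)^*$ form a decreasing sequence of $\om$-psh functions with $U_m\searrow u$, and a decreasing sequence of $\om$-psh functions converges in capacity; since $u_j\le U_j$ this yields $cap_\om(\{u_j>u+\veps\})\to0$. Together with the uniform domination and the displayed splitting, the upward integral tends to $0$.

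The genuine difficulty is the downward term $g_j=(u-u_j)^+$, and it is here that the standing hypotheses of Theorem~\ref{thm:main} must be used: $L^1$--convergence by itself does not control $cap_\om(\{u_j<u-\veps\})$, since a uniformly bounded sequence may converge in $L^1$ while its sublevel sets, though Lebesgue--null in the limit, retain positive capacity. I would instead exploit the domination $\om_{u_j}^n\le C\,\om_{\vphi_j}^n$ with $\vphi_j\to\vphi$ in capacity. The comparison principle on the Hermitian manifold gives an estimate of the form $cap_\om(\{u_j<u-\veps\})\le C\veps^{-n}\,\int_{\{u_j<u-\veps/2\}}\om_{u_j}^n\le C'\veps^{-n}\,\int_{\{u_j<u-\veps/2\}}\om_{\vphi_j}^n$, so the task becomes to show that the $\om_{\vphi_j}^n$--mass of these shrinking sublevel sets tends to $0$, which is where the capacity convergence of $\{\vphi_j\}$ (hence the uniform absolute continuity of $\{\om_{\vphi_j}^n\}$ with respect to capacity) enters. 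The hard part, and where the real work lies, is twofold: the Hermitian comparison principle holds only up to torsion corrections in $d\om$ and $dd^c\om$, which must be arranged so as not to spoil the passage to the limit; and the capacity of the sublevel set appears on both sides of the estimate, so closing it — whether by iterating in $\veps$ or by feeding in the weak convergence $\om_{\vphi_j}^n\to\om_\vphi^n$ together with the uniform domination — is the delicate final step that yields $cap_\om(\{u-u_j>\veps\})\to0$.
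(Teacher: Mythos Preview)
Your strategy reduces the lemma to proving that $u_j\to u$ in capacity, and that is strictly stronger than what is being asserted; the paper itself flags it as unknown for $n\ge 3$ (see the remark after Theorem~\ref{thm:main} and Proposition~\ref{prop:cap-convergence}). The concrete obstruction is the Hermitian comparison principle~\eqref{eq:comparison-V1}: applying it as you outline produces, besides $\int_{\{u_j<u-\veps/2\}}\om_{u_j}^n$, the torsion corrections $\int_{\{u_j<u-\veps/2\}}\om_{u_j}^k\wedge\om^{n-k}$ for every $0\le k\le n-1$. The domination $\om_{u_j}^n\le C\,\om_{\vphi_j}^n$ touches only the top degree; there is no corresponding control of the mixed Monge--Amp\`ere currents for $0<k<n$, and by Proposition~\ref{prop:cap-convergence} controlling those integrals on sublevel sets is \emph{equivalent} to the capacity convergence you are trying to establish. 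So the ``delicate final step'' is not merely delicate but circular, and neither of your proposed closing devices (iterate in~$\veps$, feed in the weak convergence $\om_{\vphi_j}^n\to\om_\vphi^n$) speaks to the intermediate-degree terms. For $n=2$ the torsion correction in~\eqref{eq:comparison-V2} is only the volume form, and then your route does work --- this is precisely the special case isolated at the end of Proposition~\ref{prop:cap-convergence}.

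The paper sidesteps capacity convergence of $\{u_j\}$ altogether. After handling the upward part $(\max\{u,u_j\}-u)$ exactly as you do, it treats the downward part $\phi_j-u_j:=\max\{u,u_j\}-u_j$ by freezing an index $k_0$ on the $w$-side: one compares $\int_X(\phi_j-u_j)\,\om_{w_j}^n$ to $\int_X(\phi_j-u_j)\,\om_{w_{k_0}}^n$ via integration by parts, and the difference is governed by $|w_j-w_{k_0}|$, which is small in capacity by hypothesis (the torsion cross-terms $d\om,\,d^c\om$ are absorbed by Cauchy--Schwarz). The remaining integral is against the single fixed measure $\om_{w_{k_0}}^n$, which vanishes on pluripolar sets, so Corollary~\ref{cor:L1-convergence} (Cegrell's lemma) kills it using only $u_j\to u$ in $L^1(dV)$. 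Note in particular that the paper's proof never invokes the domination $\om_{u_j}^n\le C\,\om_{\vphi_j}^n$; the lemma needs only $u_j\to u$ in $L^1$ and $w_j\to w$ in capacity with uniform $L^\infty$ bounds.
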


\begin{proof} The case $k=0$ is the assumption on $\{u_j\}$, and we only give here the proof of the last inductive step, the other steps are  the very similar. In the proof  that follows the constant $C>0$ is a  uniform constant depending only on $X, \om$ and the uniform bounds for $\|u_j\|_{L^\infty}$ and $\|w_j\|_{L^\infty}$, it may change from line to line.

Note that $|u-u_j| = (\max\{u, u_j\} - u_j) + (\max\{u, u_j\} - u)$. By quasi-continuity of $\om$-psh functions and Hartogs' lemma,  we have $\phi_j := \max\{u, u_j\} \to u$ in capacity. Fix $\veps>0$. Then,  when $j $ is large,
$$\begin{aligned}
	\int _{X} (\max\{u, u_j\} -u) (\om+dd^c w_j)^n 
&\leq \int_{\{|\phi_j -u|>\veps\}} (\om+dd^c w_j)^n + \veps \int_{X} (\om+dd^c w_j)^n \\
&\leq C cap_\om (|\phi_j -u|>\veps) + C_1 \veps.
\end{aligned}$$
Therefore, $\lim_{j\to \infty}  \int_X (\phi_j-u) (\om+dd^c w_j)^n =0$. Next, we have for $j>k$,
$$
\int_X (\phi_j -u_j) (\om+dd^c w_j)^n - \int_X (\phi_j - u_j) (\om+dd^c w_k)^n 
= \int_X (\phi_j - u_j) dd^c (w_j - w_k) \wed T ,
$$
where $T= T(j,k) = \sum_{s=1}^{n-1} \om_{w_j}^s \wed \om_{w_k}^{n-1-s}$. By integration by parts, 
$$\begin{aligned}
	\int_X (\phi_j - u_j) dd^c (w_j - w_k) \wed T 
&= \int_X (w_j - w_k) dd^c \left[(\phi_j -u_j) \wed T \right].
\end{aligned}
$$
Note that for $h= \phi_j -u_j$,
$$ \begin{aligned}
	dd^c (h T) 
&= 	dd^c h \wed T + dh \wed d^c T - d^c h \wed dT + h dd^c T  \\
&=	dd^c h \wed T + dh \wed d^c \om \wed T_1 - d^c h \wed d\om \wed T_2 \\
&=: S_0 + S_1 + S_2.
\end{aligned}$$
Here notice that $T_1$ and $T_2$ are positive currents, and similar type as of $T$.

We now estimate each term $S_0, S_1$ and $S_2$ separately. Firstly, for the term $S_0$, 
$$
	\int_X (w_j - w_k) dd^c (\phi_j-u_j) \wed T \leq \int_X |w_j -w_k| (\om_{\phi_j}+ \om_{u_j}) \wed T.
$$
Since $\|w_j \|_\infty, \|u_j\|_\infty \leq A$ in $X$, it follows that 
$$\begin{aligned}
	 \int_{X} |w_j - w_k| (\om_{\phi_j} + \om_{u_j}) \wed T 
&\leq  A \int_{\{|w_j -w_k| >  \veps\}}  (\om_{\phi_j} + \om_{u_j}) \wed T  \\ &\quad+ \veps\int_{\{|w_j -w_k| \leq \veps\}}  (\om_{\phi_j} + \om_{u_j}) \wed T  \\
&\leq  A^{n+1} cap_\om( |w_j -w_k| >  \veps)  + 	C \veps,
\end{aligned}$$
where the uniform bound for the second integral on the right hand side follows from uniform boundedness of the potentials (see. e.g., \cite[Proposition~2.3]{DK12}).
Since $w_j\to w$ in capacity, it follows that there exists $k_0$ such that for every $j>k\geq k_0$  the left hand side is less than $2C \veps$.

Secondly, for the term $S_1$, by the Cauchy-Schwarz inequality \cite[Proposition~1.4]{Ng16} in the Hermitian setting,
$$ \begin{aligned}
&	\left|\int_X (w_j - w_k) dh \wed d^c \om \wed T_1 \right|^2 \\
&\leq 	C \int_X |w_j-w_k| dh\wed d^c h \wed \om \wed T_1 \int_X |w_j-w_k| \om^2 \wed T_1 \\
&\leq 	C \int_X |w_j-w_k| \om^2 \wed T_1.
\end{aligned}$$
Therefore, by a similar argument as in the first case for the integral on the right hand side is bounded by $C\veps$ for every $j> k \geq k_0$ (we may increase $k_0$ if necessary).
 
Lastly, the term $S_2$ is estimated similarly as  $S_1$.
Thus, 
$$\begin{aligned}
\int_X (\phi_j -u_j) (\om+ dd^c w_j)^n 
&\leq  \int_X (\phi_j -u_j) (\om+ dd^c w_k)^n \\ 
&\quad +	\left|\int_X (\phi_j -u_j) \om_{w_j}^n - \int_X (\phi_j - u_j) \om_{w_k}^n \right| \\
&\leq  \int_X (\phi_j -u_j) (\om+ dd^c w_k)^n  + 2C \veps \\
&\leq \int_X |u-u_j| (\om+ dd^c w_{k})^n + 2 C \veps.
\end{aligned}$$
Fixing $k=k_0$ and applying Corollary~\ref{cor:L1-convergence} for $d\la = (\om+dd^c w_{k_0})^n$, we get that 
$$
	 \int_X (\phi_j -u_j) (dd^c w_j)^n  \leq (2C + 1) \veps \quad\text{for } j \geq k_1 \geq k_0.
$$
Since $\veps>0$ is arbitrary, the proof of the lemma is completed.
\end{proof}

The proof of Theorem~\ref{thm:main} is an immediate consequence of Lemma~\ref{lem:uniform-L1-convergence} and the following 
 weak convergence. 

\begin{lem} \label{lem:weak-convergence}
Suppose that 
\[\label{eq:energy-convergence}
	\lim_{j\to +\infty} \int_X |u_j -u| \om_{u_j}^n =0.
\]
Then, there exists a subsequence $\{u_{j_s}\}$ of $\{u_j\}$ such that $\om_{u_{j_s}}^n$ converges to $\om_u^n$ weakly. 
\end{lem}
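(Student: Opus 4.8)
The plan is to reduce the claim to showing that $\int_X g\,\om_{u_j}^n \to \int_X g\,\om_u^n$ for every $g\in C^\infty(X)$: such $g$ are dense in $C(X)$ and the masses $\int_X\om_{u_j}^n$ are uniformly bounded (the potentials being uniformly bounded), so convergence on this dense class forces weak convergence of the measures, with the limit functional being exactly $\om_u^n$. First I would fix the subsequence once and for all: by Corollary~\ref{cor:L1-convergence} applied with $d\la=\om_u^n$ (which charges no pluripolar set, as $u$ is bounded) I pass to a subsequence along which $\int_X|u_j-u|\,\om_u^n\to 0$; along it I also have $\int_X|u_j-u|\,\om^n=\int_X|u_j-u|\,dV\to 0$ from the $L^1$-convergence, and $\int_X|u_j-u|\,\om_{u_j}^n\to 0$ from the hypothesis~\eqref{eq:energy-convergence}. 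These three ``pure'' integrals tending to zero are the only inputs I will use.

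The core is a telescoping identity followed by integration by parts. Writing $\eta=u_j-u$ and $T=\sum_{s=0}^{n-1}\om_{u_j}^s\wed\om_u^{n-1-s}$, I have $\om_{u_j}^n-\om_u^n=dd^c\eta\wed T$, hence $\int_X g(\om_{u_j}^n-\om_u^n)=\int_X g\,dd^c\eta\wed T$. Integrating by parts on the closed manifold $X$ I move both derivatives off $\eta$ onto $g$ and $T$. The decisive point is that $d\om_{u_j}=d(\om+dd^cu_j)=d\om$ is independent of $j$, so $dT$ and $d^cT$ are sums of the smooth torsion forms $d\om,\,d^c\om$ wedged with mixed currents $\om_{u_j}^a\wed\om_u^b$; consequently \emph{no derivative of $\eta$ survives}, and the identity takes the shape
\[
\int_X g\,dd^c\eta\wed T=\int_X \eta\,dd^c g\wed T+\sum_k \int_X \eta\,B_k\wed\om_{u_j}^{a_k}\wed\om_u^{b_k},
\]
where each $B_k$ is a bounded smooth form built from $g,\om$ and their first and second derivatives, and $a_k+b_k\le n-1$.

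It then remains to estimate each term. Since $dd^cg$ and the $B_k$ are smooth, they are dominated in total variation by $C\,\om^{c}$ against positive currents (with $c=n-a_k-b_k\ge 1$), so every term is bounded by a constant times $\int_X|\eta|\,\om^{c}\wed\om_{u_j}^{a}\wed\om_u^{b}$ with $a+b+c=n$ and $c\ge1$. Applying the mixed Monge--Amp\`ere (Cauchy--Schwarz) inequality \cite[Proposition~1.4]{Ng16} iteratively, with the nonnegative weight $|\eta|$, yields
\[
\int_X|\eta|\,\om^{c}\wed\om_{u_j}^{a}\wed\om_u^{b}\le\Big(\int_X|\eta|\,\om^n\Big)^{\frac cn}\Big(\int_X|\eta|\,\om_{u_j}^n\Big)^{\frac an}\Big(\int_X|\eta|\,\om_u^n\Big)^{\frac bn}.
\]
Because $c\ge1$, the factor $\big(\int_X|\eta|\,\om^n\big)^{c/n}=\big(\int_X|u_j-u|\,dV\big)^{c/n}$ appears with a strictly positive exponent and tends to $0$, while the other two factors stay uniformly bounded; hence each term tends to $0$. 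This gives $\int_X g(\om_{u_j}^n-\om_u^n)\to0$ and the asserted weak convergence. The main obstacle is the bookkeeping of the Hermitian integration by parts, namely verifying that the torsion never leaves an uncontrollable gradient term in $\eta$; once the displayed identity is established, the mixed inequality mechanically reduces everything to the three pure integrals already known to vanish. (Alternatively one could pass through $\max\{u_j,u\}\to u$ in capacity, but the direct telescoping above avoids invoking convergence-in-capacity of an auxiliary sequence.)
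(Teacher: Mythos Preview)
Your integration-by-parts reduction is correct: after two applications of Stokes, no derivative of $\eta=u_j-u$ survives, and every term is controlled by a finite sum of integrals $\int_X|\eta|\,\om^c\wed\om_{u_j}^a\wed\om_u^b$ with $a+b+c=n$ and $c\ge1$. The gap is in the next step. The inequality you display,
\[
\int_X|\eta|\,\om^{c}\wed\om_{u_j}^{a}\wed\om_u^{b}\le\Big(\int_X|\eta|\,\om^n\Big)^{c/n}\Big(\int_X|\eta|\,\om_{u_j}^n\Big)^{a/n}\Big(\int_X|\eta|\,\om_u^n\Big)^{b/n},
\]
is \emph{false} in general, and \cite[Proposition~1.4]{Ng16} does not assert it (that proposition is a Cauchy--Schwarz bound for gradient terms $dh\wed d^c\om$, which is how it is used in Lemma~\ref{lem:uniform-L1-convergence}). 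Already pointwise one sees the failure: for $n=2$ take $\om_1=idz_1\wed d\bar z_1+idz_2\wed d\bar z_2$ and $\om_2=2idz_1\wed d\bar z_1+\veps\, idz_2\wed d\bar z_2$; then $\om_1\wed\om_2=(2+\veps)\,dV$ while $(\om_1^2)^{1/2}(\om_2^2)^{1/2}=2\sqrt{2\veps}\,dV$, so for small $\veps$ and any weight $|\eta|$ supported in such a chart the inequality reverses. Globally this reflects the Khovanskii--Teissier/Hodge-index direction $\big(\int\om_1\wed\om_2\big)^2\ge\int\om_1^2\cdot\int\om_2^2$. Consequently you cannot deduce that the mixed integrals tend to zero from the three ``pure'' ones.

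This is precisely the obstruction the paper works around. In fact Proposition~\ref{prop:cap-convergence} shows that the vanishing of \emph{all} the mixed integrals $\int_X|u_j-u|\,\om_{u_j}^k\wed\om^{n-k}$ is equivalent to convergence in capacity, and the paper explicitly leaves open whether this follows from the hypothesis when $n\ge3$. The paper's proof of Lemma~\ref{lem:weak-convergence} instead uses the auxiliary $w_s=\max\{u_{j_s},u-1/s\}$: by Hartogs' lemma $w_s\to u$ in capacity, so $\om_{w_s}^n\to\om_u^n$ weakly and, via Lemma~\ref{lem:uniform-L1-convergence}, $\int_X|u_{j_s}-u|\,\om_{w_s}^n\to0$. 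Then the identity ${\bf 1}_{\{u_{j_s}>u-1/s\}}\om_{w_s}^n={\bf 1}_{\{u_{j_s}>u-1/s\}}\om_{u_{j_s}}^n$ makes $\om_{u_{j_s}}^n$ and $\om_{w_s}^n$ agree on the large set, while the bad set $\{u_{j_s}\le u-1/s\}$ has small mass for both measures by Chebyshev and the two $L^1$-bounds. Your parenthetical alternative (``pass through $\max\{u_j,u\}\to u$ in capacity'') is essentially this route, and it is the one that actually closes the argument.
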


\begin{proof} 
Let $A>0$ be such that  $-A \leq u_j, u \leq 0$.
By passing to a subsequence we may assume further that $u_j \to u$ a.e. in $X$ with respect to $dV$. 
Note that
 $$u = (\limsup_{j\to \infty} u_j)^* = \lim_{j\to \infty} (\sup_{\ell \geq j} u_\ell)^*.$$
Set 
\[\label{eq:hartogs-s}
	w_{j} =\max \{u_j, u-1/j\}.
\]
By the Hartogs lemma $w_j$ converges to $u$ in capacity. Therefore, by the convergence theorem in \cite{DK12} (see also \cite{BT82}),  
$
	\lim_{j\to \infty} \om_{w_j}^n = \om_u^n.
$
Thanks to Lemma~\ref{lem:uniform-L1-convergence} and the assumption $w_j \to u$ in capacity, we have
\[\label{eq:uniform-L1}
	\int_{X} |u_j -u| (\om + dd^c w_j)^n \to 0 \quad \text{as } j\to +\infty.
\]

Now we are ready to conclude the proof of the lemma. By \eqref{eq:energy-convergence} and \eqref{eq:uniform-L1} we can choose  a subsequence $\{u_{j_s}\} \subset \{u_j\}$ so that $$\int_X |u-u_{j_s}| (\om + dd^c u_{j_s})^n + \int_X |u-u_{j_s}|(\om + dd^c w_s)^n < 1/s^2.$$ 
Recall from \eqref{eq:hartogs-s} that $w_s = \max\{u_{j_s}, u-1/s\}$ and this implies
$$
	{\bf 1}_{\{u_{j_s} > u-1/s\}} (\om+ dd^c w_s)^n = {\bf 1}_{\{u_{j_s} > u-1/s\}} (\om + dd^c u_{j_s})^n.
$$ 
Therefore,  for $\eta \in C^\infty(X)$,
$$\begin{aligned}
	\left| \int_X \eta \om_{u}^n - \int_X \eta \om_{u_{j_s}}^n\right| 
&\leq  \left| \int_X \eta \om_{u}^n - \int_X \eta \om_{w_s}^n\right| + \left| \int_X \eta \om_{w_s}^n - \int_X \eta \om_{u_{j_s}}^n\right| \\
&\leq \left| \int_X \eta \om_{u}^n - \int_X \eta \om_{w_s}^n\right| + \left| \int_{\{u_{j_s} \leq u-1/s \}} \eta \om_{w_s}^n - \eta \om_{u_{j_s}}^n\right|.
\end{aligned}$$
The first term on the right hand side goes to zero as $\om_{w_s}^n \to \om_u^n$. It remains to estimate the second term. Firstly, by the choice of $\{u_{j_s}\} $ at the beginning of this proof,
$$\begin{aligned}
	\left|\int_{\{u_{j_s} \leq u-1/s \}} \eta \om_{u_{j_s}}^n \right| 
&\leq \|\eta\|_{L^\infty} \int_{\{u_{j_s} \leq u-1/s \}} \om_{u_{j_s}}^n \\
&\leq s \|\eta\|_{L^\infty} \int_X |u- u_{j_s}| \om_{u_{j_s}}^n  \leq \frac{1}{s}\|\eta\|_{L^\infty} \to 0 \quad\text{as } s\to +\infty.
\end{aligned}$$
Similarly, 
$$ \begin{aligned}
 \left|\int_{\{u_{j_s} \leq u-1/s\}} \eta \om_{w_s}^n \right| 
 &\leq \|\eta\|_{L^\infty} \left| \int_{\{u_{j_s}\leq  u-1/s\}} \eta \om_{w_s}^n\right|  \\
 &\leq s \|\eta\|_{L^\infty} \int_X |u- u_{j_s}| \om_{w_s}^n  \to 0 \quad\text{as } s\to +\infty.
\end{aligned}
$$
The last two estimates complete the proof of the lemma.  This also completed the proof of Theorem~\ref{thm:main}.
\end{proof}

We shall have a similar criterion for convergence in capacity \cite[Proposition~2.8]{KN21}. Let us recall \cite[Theorem~3.5]{DK12} and \cite[Lemmas~2.1, 2.2]{KN1}. 
Let $B>0$ be a uniform constant satisfying 
$$\begin{aligned}
&	-B\om^2 \leq 2n dd^c \om \leq B \om^2, \\
&	-B \om^3 \leq 4n^2  d\om \wed d^c \om \leq B\om^3.
\end{aligned}
$$
Then, for $\vphi, \psi \in PSH(X, \om) \cap L^\infty(X)$ satisfying $\sup_{\{\vphi<\psi\}} (\psi -\vphi) \leq 1$, 
\[ \label{eq:comparison-V1} \begin{aligned}
	\int_{\{\vphi<\psi\}} \om_\psi^n  &\leq \int_{\{\vphi < \psi\}} \om_{\vphi}^n 
	+ C_n \max\{1,B\}^n \sum_{k=0}^{n} \int_{\{\psi< \vphi\}} \om_{\vphi}^k \wed \om^{n-k},
\end{aligned}
\] 
where $C_n$ is a dimensional constant. If  the dimension $n=2$, then we have a better estimate, namely,
\[ \label{eq:comparison-V2} \begin{aligned}
	\int_{\{\vphi<\psi\}} \om_\psi^2  &\leq \int_{\{\vphi < \psi\}} \om_{\vphi}^2 
	+ C_n \max\{1,B\} \int_{\{\psi< \vphi\}} \om^2,
\end{aligned}
\]

\begin{prop} \label{prop:cap-convergence} Let $\{u_j\}$ be the sequence in Theorem~\ref{thm:main}. Then, $u_j$ converges to $u$ in capacity if and only if 
\[\label{eq:cap-convergence}
	\lim_{j\to +\infty} \int_X |u_j-u| \om_{u_j}^k \wed \om^{n-k} = 0, \quad k=0,...,n.
\]
In particular, if $n=2$, then the sequence converges in capacity.
\end{prop}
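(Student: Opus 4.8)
The plan is to establish the two implications separately, reading the two–dimensional statement off the sharper inequality \eqref{eq:comparison-V2}. For the direction ``convergence in capacity $\Rightarrow$ \eqref{eq:cap-convergence}'' the argument is immediate: the domination hypothesis of Theorem~\ref{thm:main} yields a uniform bound $\int_X\om_{u_j}^n\leq C\int_X\om_{\vphi_j}^n\leq C_1$, so the sequence $\{u_j\}$ itself meets the hypotheses of Lemma~\ref{lem:uniform-L1-convergence} with $w_j=u_j$ and $w=u$. That lemma then gives $\int_X|u_j-u|\om_{u_j}^k\wed\om^{n-k}\to0$ for every $k=0,\dots,n$, which is exactly \eqref{eq:cap-convergence}.

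For the converse I assume \eqref{eq:cap-convergence}, fix $\veps>0$, and normalize $-A\leq u_j,u\leq0$; it suffices to show $cap_\om(\{u_j<u-\veps\})\to0$ and $cap_\om(\{u_j>u+\veps\})\to0$. The ``upper'' set costs nothing: since $u_j\to u$ in $L^1$ and the functions are uniformly bounded, $\max\{u_j,u\}\to u$ in capacity by Hartogs' lemma, exactly as in the proof of Lemma~\ref{lem:weak-convergence}, and $\{u_j>u+\veps\}=\{\max\{u_j,u\}-u>\veps\}$. The ``lower'' set carries the whole weight of the Monge--Amp\`ere hypothesis. Given a competitor $\rho\in PSH(X,\om)$ with $0\leq\rho\leq1$, I would localize by setting $g:=(1-\delta)u+\delta\rho-\veps$ with $\delta:=\veps/\big(2(A+1)\big)$ and testing with $\psi:=\max\{u_j,g\}$. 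The choice of $\delta$ forces $\{u_j<u-\veps\}\subseteq\{u_j<g\}\subseteq\{u_j<u-\veps/2\}$, while on $\{u_j<g\}$ one has $\om_\psi^n=\om_g^n\geq\delta^n\om_\rho^n$. Feeding $\vphi=u_j$ and $\psi$ into the comparison inequality \eqref{eq:comparison-V1} then bounds $\int_{\{u_j<u-\veps\}}\om_\rho^n$, up to the factor $\delta^{-n}$, by the main mass $\int_{\{u_j<g\}}\om_{u_j}^n$ together with the error masses $\int_{\{u_j<g\}}\om_{u_j}^k\wed\om^{n-k}$, $k=0,\dots,n$. Every such integral is taken over a subset of $\{u_j<u-\veps/2\}$, so Chebyshev's inequality bounds it by $\tfrac2\veps\int_X|u_j-u|\om_{u_j}^k\wed\om^{n-k}$, which tends to $0$ by \eqref{eq:cap-convergence}. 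Taking the supremum over $\rho$ gives $cap_\om(\{u_j<u-\veps\})\to0$.

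The delicate point, which I would treat most carefully, is the normalization $\sup_{\{\vphi<\psi\}}(\psi-\vphi)\leq1$ required by \eqref{eq:comparison-V1}: since $\psi-\vphi=g-u_j$ may be as large as $A$, I would first replace $\psi$ by the convex combination $(1-\tfrac1M)u_j+\tfrac1M\psi$ with $M=A+1$, which restores the normalization at the expense of a harmless factor $M^n$ without altering the localization. More fundamentally, it is the occurrence of the entire range $k=0,\dots,n$ of mixed masses in the error of \eqref{eq:comparison-V1} that forces one to assume all of \eqref{eq:cap-convergence}; controlling only the top mass $k=n$ does not suffice.

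This last remark also pinpoints why $n=2$ is special. In that case the sharper inequality \eqref{eq:comparison-V2} replaces the full sum of mixed error masses by the single background term built from $\om^2$. Running the identical localization, $cap_\om(\{u_j<u-\veps\})$ is then controlled by the main mass $\int_{\{u_j<u-\veps/2\}}\om_{u_j}^2$ --- which tends to $0$ via domination and Lemma~\ref{lem:uniform-L1-convergence}, i.e.\ the case $k=2$ --- plus a multiple of $\int_{\{u_j<u-\veps/2\}}\om^2\leq\tfrac2\veps\|u_j-u\|_{L^1}$, the case $k=0$ supplied directly by the $L^1$--hypothesis of Theorem~\ref{thm:main}. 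The intermediate mixed mass $k=1$ never appears, so convergence in capacity holds unconditionally when $n=2$; for $n\geq3$ the middle masses $\int_X|u_j-u|\om_{u_j}^k\wed\om^{n-k}$ with $1\leq k\leq n-1$ are not controlled by the domination hypothesis, and this is precisely the gap that remains open.
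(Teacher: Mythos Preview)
Your argument is correct and follows essentially the same route as the paper's proof: Lemma~\ref{lem:uniform-L1-convergence} for the forward direction, Hartogs' lemma for the ``upper'' set, and the comparison inequality \eqref{eq:comparison-V1} (resp.\ \eqref{eq:comparison-V2} when $n=2$) applied to $\vphi=u_j$ and a convex combination of $u$ and the competitor $\rho$ for the ``lower'' set, followed by Chebyshev. The only cosmetic difference is that the paper normalizes at the outset to $-1\le u_j,u\le 0$ and takes $-1\le\rho\le 0$, so that the hypothesis $\sup_{\{\vphi<\psi\}}(\psi-\vphi)\le 1$ of \eqref{eq:comparison-V1} holds automatically and your extra convex combination with $u_j$ (and the detour through $\max\{u_j,g\}$, which yields the same sublevel set anyway) become unnecessary.
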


\begin{proof} Suppose that $u_j \to u$ in capacity. It follows from Lemma~\ref{lem:uniform-L1-convergence} that \eqref{eq:cap-convergence} holds for every $k=0, 1, ...,n$. Conversely, suppose that \eqref{eq:cap-convergence} holds true. Let  $\veps>0$ be fixed and without loss of generality we may assume that 
$
	-1 \leq u_j, u \leq 0.
$
We have
$$
	cap_\om(|u_j-u|>\veps) \leq cap_\om (u_j-u>\veps) + cap_\om (u-u_j >\veps).
$$
By Hartogs' lemma, $\max\{u_j, u\} \to u$ in capacity. Hence, $$\lim_{j\to +\infty} cap_\om (u_j-u >\veps) \leq \lim_{j\to +\infty} cap_\om(\max\{u_j,u\} -u >0) =0.$$
It remains to prove that $cap_\om(u-u_j>\veps) \to 0$ as $j\to +\infty$. Let $-1\leq \rho \leq 0$ be a function in $PSH(X,\om)$. Then,
$$
	\{u_j < u -\veps\} \subset \left\{u_j < (1-\frac{\veps}{4}) u + \frac{\veps}{4} \rho - \frac{3\veps}{4} \right\} \subset  \left \{ u_j < u - \frac{\veps}{2}\right\}
$$
Applying  \eqref{eq:comparison-V1} for $\vphi = u_j $ and $\psi = (1-\veps/4) u + \veps \rho/4 - 3\veps/4$ and then using the previous inclusions for domains of integration on the left hand side and on the right hand side, we have
$$\begin{aligned}
&	\int_{\{u_j < u-\veps\}} \left[\om 
+ (1-\frac{\veps}{4}) dd^c u + \frac{\veps}{4} dd^c  \rho \right]^n  \\
&\leq \int_{\{u_j< u - \veps/2\}} \om_{u_j}^n 
	 + C_n \max\{1,B\}^n \sum_{k=0}^{n} \int_{\{u_j< u-\veps/2\}} \om_{u_j}^k \wed \om^{n-k},
\end{aligned}$$
Since the integrand on the left hand side is larger than $(\veps/4)^n\om_\rho^n$, it follows that 
\[ \label{eq:cap-inequality} \begin{aligned}
	\left(\frac{\veps}{4} \right)^n cap_\om (u_j< u-\veps) &\leq \int_{\{u_j < u -\veps/2\}} \om_{u_j}^n \\ 
&\quad	+ C_n \max\{1,B\}^n \sum_{k=0}^{n} \int_{\{u_j< u-\veps/2\}} \om_{u_j}^k \wed \om^{n-k}.
\end{aligned}
\] 
Notice that for $k=0,...,n$ we have
$$
	\int_{\{u_j < u -\veps/2\}} \om_{u_j}^k \wed \om^{n-k}  \leq \frac{2}{\veps}  \int_X |u_j -u| \om_{u_j}^k \wed \om^{n-k} .
$$
Combining this inequality with \eqref{eq:cap-convergence} and \eqref{eq:cap-inequality} we obtain
$\lim_{j\to +\infty} cap_\om(u_j < u -\veps) =0$. The proof is completed.

Finally let us  finish the proof in the case $n=2$. Indeed, by \eqref{eq:comparison-V2} the  inequality corresponding to \eqref{eq:cap-inequality} reads
$$
	\left(\frac{\veps}{4} \right)^2 cap_\om(\{u_j < u-\veps\}) \leq \int_{\{u_j< u-\veps/2\}}\om_{u_j}^2 + C_n \max\{1,B\} \int_{\{u_j < u-\veps/2\}} \om^2.
$$
The right hand side is bounded by 
$$
	\frac{2}{\veps}\int_{X} |u_j-u| \om_{u_j}^2 + \frac{2C_n}{\veps} \max\{1,B\} \int_X |u_j- u| \om^2,
$$
which tends to zero as $j$ goes to infinity.
Since $\veps>0$ is fixed we conclude the convergence in capacity in dimension 2.
\end{proof}

\end{document}